\newtheorem{theorem}{Theorem}[section]
\newtheorem{definition}[theorem]{Definition}
\newtheorem{corollary}[theorem]{Corollary}
\newtheorem{lemma}[theorem]{Lemma}
\newtheorem{proposition}[theorem]{Proposition}
\newtheorem{remark}[theorem]{Remark}
\numberwithin{equation}{subsubsection}
\title[The Spectral Theorem for Quaternionic Normal Operators]{The Spectral Theorem for Quaternionic Normal Operators}
\author{El Hassan Benabdi and Mohamed Barraa}
\subjclass[2010]{47A10; 47A60; 46S10; 47S10}
\keywords{Quaternionic Hilbert space; Normal Operator; S-spectrum; spectral measure}
\address{Department of Mathematics, Faculty of Sciences Semlalia, Cadi Ayyad University, Marrakesh, Morocco} 
\email{elhassan.benabdi@gmail.com}
\email{barraa@hotmail.com}
\date{}
\begin{document}
\maketitle
\begin{abstract}
Let $\mathcal{H}$ be a right quaternionic Hilbert space and let $T$ be a bounded normal right quaternionic linear operator on $\mathcal{H}$. In this paper, we prove that there exists a unique spectral measure $E$ in $\mathcal{H}$ such that 
$$T=\int_{\sigma_S(T)}\lambda dE_\lambda,$$
where $\sigma_S(T)$ denotes the spherical spectrum of $T$.
\end{abstract}

\section{Introduction}
Let $\mathcal{H}$ be a complex Hilbert space and let $\mathcal{B}(\mathcal{H})$ denotes the set of all bounded quaternionic right linear operators from $\mathcal{H}$ to $\mathcal{H}$. Let $T\in\mathcal{B}(\mathcal{H})$ be normal, it is well-known that there exists a unique spectral measure $E$ in $\mathcal{H}$ such that 
\begin{align}
T=\int_{\sigma(T)}\lambda dE_\lambda,\label{6eq1}
\end{align}
where $\sigma(T)$ denotes the spectrum of $T$ (see, e.g., \cite[Theorem 3.15]{6C}). A large part of the theory of normal operators depends on this fact. \\

Several authors discussed the spectral theorem based on the S-spectrum \cite{6ACK,6CGK,6RS,6V}. Results related to the quaternionic spectral theorem can furthermore be found in \cite{6Gh,6RS2}.\\

The reader is encouraged to see the book of Colombo, Gantner and Kimsey \cite{6CGK} for a very good and detailed write-up of spectral theory based on the S-spectrum. In this book, the authors showed that if $T$ is a bounded normal right quaternionic linear operator on a right quaternionic Hilbert space $\mathcal{H}$, then there exists a unique spectral measure $E^i$ on $\Omega_i^+:=\sigma_S(T)\cap\mathbb{C}_i^+$ (see, \cite[Theorem 11.2.1]{6CGK}) such that
$$\langle  Tx,y\rangle=\int_{\Omega_i^+}\text{Re}(\lambda) d\langle E_\lambda^i x, y\rangle+\int_{\Omega_i^+}\text{Im}(\lambda) d\langle E_\lambda^i Jx, y\rangle\;\;\text{ for all }\;\; x,y\in\mathcal{H},$$ 
where $J$ is the imaginary component in the $T = A+JB$ decomposition (see, \cite[Theorem 9.3.5]{6CGK}) of the operator $T$.

In \cite{6Ghh}, the authors showed a version of the spectral theorem (see, \cite[Theorem 4.1]{6Ghh}). Then in \cite[Theorem 14.4.2]{6CGK}, the authors  rewrite \cite[Theorem 4.1]{6Ghh} in the terminology of spectral systems as follows.\\
Let $T = A + JB\in\mathcal{B}_R(\mathcal{H})$ be a normal operator. There exists a unique quaternionic spectral measure $E$ on the $\sigma$-algebra of axially symmetric Borel subsets of $\sigma_S(T)$ \big(the spherical spectrum of $T$\big), such that $(E, J)$ is a spectral system and such that
$$T=\int_{\sigma_S(T)}\lambda dE_J(\lambda).$$

In this paper, we give a canonical decomposition of a bounded normal right quaternionic linear operator on a right quaternionic Hilbert space using only the classical decomposition of bounded normal operators in the complex Hilbert spaces.\\

Before starting, we recall some basic definitions and results which are useful in the sequel.
\subsection*{Quaternions}
Let $\mathbb{H}$ be the set of all elements, called quaternions, of the form $q = a + b\mathrm{i} + c\mathrm{j} + d\mathrm{k}; a,b,c,d\in\mathbb{R}$, where $\mathrm{i},\mathrm{j}$ and $\mathrm{k}$ are quaternion units satisfying:
\begin{equation}
\label{6e1}
\mathrm{i}^2=\mathrm{j}^2=\mathrm{k}^2=\mathrm{i}\mathrm{j}\mathrm{k}=-1.
\end{equation}
Then $\mathbb{H}$ is a non-commutative $\mathbb{R}$-algebra with the addition defined same as in $\mathbb{C}$ and multiplication given by Equations (\ref{6e1}). For a given $q\in\mathbb{H}$, we define the real part of $q$, $\text{Re}(q):=a$ and the imaginary part of $q$, $\text{Im}(q):=b\mathrm{i} + c\mathrm{j} + d\mathrm{k}$. The conjugate and the modulus of $q$ are given respectively by
$$\bar{q}=a-b\mathrm{i}-c\mathrm{j}-d\mathrm{k},\;\vert q\vert=\sqrt{q\bar{q}}=\sqrt{a^2+b^2+c^2+d^2}.$$
The unit sphere of imaginary quaternions is given by
$$\mathbb{S}=\{q\in\mathbb{H}:q^2=-1\}.$$
Let $p$ and $q$ be two quaternions. $p$ and $q$  are said to be conjugated, if there is $s\in\mathbb{H}\setminus\{0\}$ such that $p=sqs^{-1}$. The set of all quaternions conjugated with $q$, is equal to the 2-sphere 
$$[q] =\{\text{Re}(q)+\vert\text{Im}(q)\vert i : i \in \mathbb{S}\}=\text{Re}(q)+\vert\text{Im}(q)\vert\mathbb{S}.$$
For every $U\subseteq\mathbb{H}$, $[U]$ denotes the set of all quaternions conjugated with the elements of $U$;
$$[U]:=\{[q]:q\in U\}.$$
For every $i \in \mathbb{S}$, denote by $\mathbb{C}_i$ the real subalgebra of $\mathbb{H}$ generated by $i$; that is, 
$$\mathbb{C}_i:=\{\alpha+\beta i\in\mathbb{H}: \alpha,\beta\in\mathbb{R}\}.$$
We say that $U\subseteq\mathbb{H}$ is axially symmetric if $[q] \subset U$ for every $q \in U$.\\

For a thorough treatment of the algebra of quaternions $\mathbb{H}$, the reader is referred, for instance, to \cite{6Gh}.
\subsection*{Quaternionc Hilbert spaces}
\begin{definition}
Let $(\mathcal{H},+)$ be an abelian group. $\mathcal{H}$ is a right quaternionic vector space if it is endowed with a right quaternionic multiplication $(\mathcal{H},\mathbb{H})\rightarrow \mathcal{H}$, $(x,q) \mapsto xq$ such that for all $x,y \in \mathcal{H}$ and all $p, q \in\mathbb{H}$, we have:
\begin{enumerate}
\item[(i)]  $(x+y)q = xq + yq$;
\item[(ii)]  $x(p + q) = xp + xq$;
\item[(iii)] $(xp)q = x(pq)$;
\item[(iv)]  $ x1=x$.
\end{enumerate}
\end{definition}
\begin{definition}
Let $\mathcal{H}$ be a right quaternionic vector space. A Hermitian quaternionic scalar product is a map $\langle\cdot,\cdot\rangle:\mathcal{H}\times \mathcal{H}\rightarrow \mathbb{H},(x,y)\mapsto \langle x,y\rangle$ that satisfies the following properties.
\begin{itemize}
\item[(i)] $\langle xp+yq,z\rangle=\langle x,z\rangle p+\langle y,z\rangle q$;
\item[(ii)] $\overline{\langle x,y\rangle}=\langle y,x\rangle$;
\item[(iii)] $\langle x,x\rangle\geq0$ and $\langle x,x\rangle=0\Leftrightarrow x=0$;
\end{itemize}
for all $p,q\in\mathbb{H}$ and all $x, y, z \in \mathcal{H}$.
\end{definition}
Suppose that $\mathcal{H}$ is equipped with such a Hermitean quaternionic scalar product. Then we can define the quaternionic norm $\Vert\cdot\Vert : \mathcal{H}\rightarrow \mathbb{R}^+$ by setting
$$\Vert x\Vert=\sqrt{\langle x,x\rangle}\;\;\;\text{ for every }\;\;\; x\in\mathcal{H}.$$

A right quaternionic vector space $\mathcal{H}$ is said to be a right quaternionic Hilbert space if it is equipped with a Hermitean quaternionic scalar product which is complete with respect to its natural distance $d(x,y):=\Vert x-y\Vert$ for all $x,y\in\mathcal{H}$. 
\begin{definition}
Let $\mathcal{H}$ be a right quaternionic Hilbert space. A right linear operator on $\mathcal{H}$ is a map $T : \mathcal{H} \rightarrow \mathcal{H}$ such that:
$$ T(xq+y) = (Tx)q + Ty\;\;\text{ for all }\;\; x, y\in \mathcal{H}\;\;\text{ and all }\;\;q\in\mathbb{H}.$$
A right linear operator $T$ on $\mathcal{H}$ is called bounded if
$$\Vert T\Vert:=\sup\{\Vert Tx\Vert: x\in \mathcal{H}, \Vert x\Vert=1\}<\infty.$$
For all $r\in\mathbb{R}$ and all $x\in \mathcal{H}$. Define  $(Tr)x := (Tx)r$ and $rT := Tr$. The adjoint of $T$ will be denoted by $T^*$. The set of all right linear bounded operators on $\mathcal{H}$ is denoted by $\mathcal{B}_R(\mathcal{H})$. For $T\in\mathcal{B}_R(\mathcal{H})$, let $\mathcal{N}(T)$ denote the null space of $T$, and let $\mathcal{R}(T)$ denote the range of $T$.
\end{definition}
\subsection*{Spherical spectrum}
Colombo, Sabadini and Struppa \cite{6CSS} extended the definitions of the spectrum and resolvent in quaternionic Hilbert spaces as follows.
\begin{definition}
Let $\mathcal{H}$ be a right quaternionic Hilbert space and let $T \in \mathcal{B}_R(\mathcal{H})$. For $q \in\mathbb{H}$, we set
$$Q_q(T) := T^2 -2\text{Re}(q)T + \vert q\vert^2I.$$
Where $I$ is the identity operator on $\mathcal{H}$.
We define the spherical resolvent set $\rho_S(T)$ of $T$ as
$$\rho_S(T) := \{q \in\mathbb{H} : Q_q(T) \text{ is invertible in } \mathcal{B}_R(\mathcal{H})\},$$
and we define the spherical spectrum $\sigma_S(T)$ of $T$ as  
$$\sigma_S(T):=\mathbb{H}\setminus\rho_S(T).$$
\end{definition}
\begin{proposition}
\label{6p2}
Let $T\in\mathcal{B}_R(\mathcal{H})$. Then $\sigma_S(T)$ is a nonempty compact and axially symmetric set.
\end{proposition}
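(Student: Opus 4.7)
My plan is to verify the three properties separately.

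For axial symmetry, note that $Q_q(T) = T^2 - 2\text{Re}(q) T + |q|^2 I$ depends on $q$ only through $\text{Re}(q)$ and $|q|$, which are constant on the sphere $[q]$. Hence $Q_q(T) = Q_{q'}(T)$ whenever $q' \in [q]$, so membership in $\sigma_S(T)$ depends only on the conjugacy class of $q$. For closedness, the map $q \mapsto Q_q(T)$ is continuous (it is polynomial in $q$) and the set of invertible elements of the Banach algebra $\mathcal{B}_R(\mathcal{H})$ is open, so $\rho_S(T)$ is open. For boundedness, writing $Q_q(T) = |q|^2\bigl(I - 2\text{Re}(q)|q|^{-2} T + |q|^{-2} T^2\bigr)$, a Neumann-series argument shows the parenthesized factor is invertible once $|q|$ is sufficiently large compared to $\|T\|$; thus $\sigma_S(T)$ is contained in a closed ball, and combined with closedness it is compact.

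The nontrivial part is nonemptiness, and this is where I expect the main obstacle to lie. My approach is to reduce to the classical complex case by slice-complexifying $\mathcal{H}$. Fix an imaginary unit $i \in \mathbb{S}$. Right multiplication $R_i : x \mapsto xi$ on $\mathcal{H}$ satisfies $R_i^2 = -I$ and commutes with $T$ by quaternionic right linearity, so it endows $\mathcal{H}$ with the structure of a complex Hilbert space over $\mathbb{C}_i$ on which $T$ is bounded $\mathbb{C}_i$-linear. By the classical Gelfand theorem for bounded operators on a complex Hilbert space, the spectrum $\sigma_{\mathbb{C}_i}(T) \subset \mathbb{C}_i$ of $T$ viewed in this setting is nonempty.

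It then suffices to prove $\sigma_{\mathbb{C}_i}(T) \subset \sigma_S(T)$. The key identity is the factorization $Q_\lambda(T) = (T - \lambda I)(T - \bar\lambda I) = (T - \bar\lambda I)(T - \lambda I)$ for $\lambda \in \mathbb{C}_i$, which holds in the $\mathbb{C}_i$-linear category because $T$ commutes with right multiplication by every element of $\mathbb{H}$. Suppose $\lambda \in \mathbb{C}_i$ and $Q_\lambda(T)$ is invertible in $\mathcal{B}_R(\mathcal{H})$; its inverse is then automatically $\mathbb{C}_i$-linear, and $(T - \bar\lambda I) Q_\lambda(T)^{-1}$ provides a two-sided $\mathbb{C}_i$-linear inverse of $T - \lambda I$, forcing $\lambda \notin \sigma_{\mathbb{C}_i}(T)$. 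The contrapositive yields the inclusion, and nonemptiness follows. The delicate point throughout is the interplay between quaternionic right linearity and $\mathbb{C}_i$-right linearity; the factorization above is the bridge that transports the nonemptiness of the classical spectrum across to the S-spectrum.
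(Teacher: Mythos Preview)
Your argument is correct. Axial symmetry is immediate from the form of $Q_q(T)$; closedness follows because $q\mapsto Q_q(T)$ is norm-continuous into the real Banach algebra $\mathcal{B}_R(\mathcal{H})$, whose group of invertibles is open; the Neumann-series bound gives boundedness. For nonemptiness your reduction is exactly the right one: restricting scalars to $\mathbb{C}_i$ gives the complex Hilbert space $\mathcal{H}_i$ and the $\mathbb{C}_i$-linear operator $T_i$ (these are precisely the objects introduced later in the paper), and the factorization $Q_\lambda(T)=(T-\lambda I)(T-\bar\lambda I)$ in $\mathcal{B}(\mathcal{H}_i)$ shows $\sigma(T_i)\subset\sigma_S(T)$, so the classical nonemptiness of $\sigma(T_i)$ finishes the proof. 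The one step you leave implicit---that $(T-\bar\lambda I)Q_\lambda(T)^{-1}$ is also a \emph{left} inverse of $T-\lambda I$---follows because $T-\lambda I$ commutes with $Q_\lambda(T)$ and hence with its inverse.

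As for comparison with the paper: the paper does not supply its own proof of this proposition. It is stated as a known background fact, with a pointer to the references \cite{6BB,6CGK,6CSS,6Gh} for details. Your approach is entirely in the spirit of those references; in particular the inclusion $\sigma(T_i)\subset\sigma_S(T)\cap\mathbb{C}_i$ that you establish is one half of the identity $\sigma(T_i)=\sigma_S(T)\cap\mathbb{C}_i$ which the paper later invokes from \cite[Proposition~3.8]{6BB}.
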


For more details on the notion of spherical spectrum, the reader may consult, e.g., \cite{6BB,6CGK,6CSS,6Gh}.
\section{The Spectral Theorem and Consequences}
In the following, $\mathcal{H}$ will be a fixed right quaternionic Hilbert space and $\langle\cdot,\cdot\rangle$ denotes the inner product in $\mathcal{H}$.
\begin{lemma}
\label{6l1}
Let $i\in\mathbb{S}$. The application $\langle\cdot,\cdot\rangle_i:\mathcal{H}\times \mathcal{H}\rightarrow \mathbb{C}_i,(x,y)\mapsto\mathrm{Re}\big(\langle x,y\rangle\big)-\mathrm{Re}\big(\langle x,y\rangle i\big)i$ satisfies the following properties.
\begin{itemize}
\item[(i)] $\langle xp+yq,z\rangle_i=\langle x,z\rangle_ip+\langle y,z\rangle_iq$;
\item[(ii)] $\overline{\langle x,y\rangle_i}=\langle y,x\rangle_i$;
\item[(iii)] $\langle x,x\rangle_i\geq0$ and $\langle x,x\rangle_i=0\Leftrightarrow x=0$;
\end{itemize}
for all $p,q\in\mathbb{C}_i$ and all $x, y, z \in \mathcal{H}$.
\end{lemma}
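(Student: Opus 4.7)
The plan is to introduce the auxiliary map $\pi_i : \mathbb{H} \to \mathbb{C}_i$ defined by $\pi_i(q) = \mathrm{Re}(q) - \mathrm{Re}(qi)\,i$, so that by construction $\langle x,y\rangle_i = \pi_i(\langle x,y\rangle)$. Then properties (i)--(iii) should follow by combining three structural facts about $\pi_i$ with the axioms of the Hermitian quaternionic scalar product.

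The three facts about $\pi_i$ I would establish first are: (a) $\pi_i$ is $\mathbb{R}$-linear and is the identity on $\mathbb{C}_i$ (in particular it fixes $\mathbb{R}$); (b) $\pi_i$ is $\mathbb{C}_i$-right linear in the sense that $\pi_i(qp) = \pi_i(q)\,p$ for every $q\in\mathbb{H}$ and $p\in\mathbb{C}_i$; and (c) $\pi_i$ commutes with conjugation, i.e.\ $\pi_i(\bar q) = \overline{\pi_i(q)}$. Item (a) is a one-line check from the definition. Item (c) reduces to the identity $\mathrm{Re}(\bar q\,i) = -\mathrm{Re}(qi)$, which follows from $\mathrm{Re}(ab)=\mathrm{Re}(ba)$ and $\overline{iq}=\bar q\,\bar i = -\bar q\,i$.

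The main obstacle is (b), because $\mathbb{H}$ is noncommutative and one must see why pulling an element of $\mathbb{C}_i$ out on the right is legitimate. The cleanest way is to decompose $\mathbb{H}=\mathbb{C}_i\oplus\mathbb{C}_i^{\perp}$ (orthogonal with respect to the real inner product $\mathrm{Re}(p\bar q)$), observe that $\pi_i$ is precisely the projection onto $\mathbb{C}_i$ along $\mathbb{C}_i^{\perp}$, and check that $\mathbb{C}_i^{\perp}$ is invariant under right multiplication by $\mathbb{C}_i$. The latter holds because any $j'\in\mathbb{S}$ anticommuting with $i$ satisfies $j'p=\bar p\,j'$ for $p\in\mathbb{C}_i$, so the image stays in the span of $\{j',\,ij'\}=\mathbb{C}_i^{\perp}$. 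Once this is in place, (b) is automatic.

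With (a), (b), (c) at hand, the three conclusions of the lemma fall out routinely. For (i), right linearity of $\langle\cdot,\cdot\rangle$ gives $\langle xp+yq,z\rangle = \langle x,z\rangle p + \langle y,z\rangle q$, and applying $\pi_i$ together with (a) and (b) yields $\langle xp+yq,z\rangle_i = \langle x,z\rangle_i p + \langle y,z\rangle_i q$. For (ii), apply (c) to $\overline{\langle x,y\rangle_i}=\overline{\pi_i(\langle x,y\rangle)}=\pi_i(\overline{\langle x,y\rangle})=\pi_i(\langle y,x\rangle)=\langle y,x\rangle_i$. For (iii), $\langle x,x\rangle\in\mathbb{R}^+$ by the Hermitian axioms, and $\pi_i$ is the identity on $\mathbb{R}$ by (a), so $\langle x,x\rangle_i = \langle x,x\rangle$, giving both nonnegativity and the definiteness statement at once.
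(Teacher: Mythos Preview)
Your proof is correct. The paper's own argument is a bare-hands coordinate computation: for (i) it writes $p=\alpha+\beta i$ and expands $\langle xp,y\rangle_i$ term by term; for (ii) it manipulates $\mathrm{Re}(\overline{\langle x,y\rangle})$ and $\mathrm{Re}(\overline{i\langle x,y\rangle})$ directly; and (iii) is the same one-liner you give. Your route differs in that you isolate the map $\pi_i:\mathbb{H}\to\mathbb{C}_i$, recognise it as the orthogonal projection along the decomposition $\mathbb{H}=\mathbb{C}_i\oplus\mathbb{C}_i^{\perp}$, and derive everything from the right $\mathbb{C}_i$-invariance of $\mathbb{C}_i^{\perp}$ and the compatibility of $\pi_i$ with conjugation. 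This buys you a cleaner, coordinate-free argument and a reusable lemma about $\pi_i$ (which in fact underlies several later computations in the paper, e.g.\ the passage between $\langle\cdot,\cdot\rangle$ and $\langle\cdot,\cdot\rangle_i$ for real quantities). The paper's direct expansion is shorter on the page but hides the structural reason; your version makes the reason explicit at the cost of a little extra setup.
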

\begin{proof}
Let $p=\alpha+\beta i\in\mathbb{C}_i$ with $\alpha,\beta\in\mathbb{R}$ and let $x, y\in \mathcal{H}$. Then\\

\noindent (i) $\langle xp,y\rangle_i=\langle x(\alpha+\beta i),y\rangle_i=\mathrm{Re}\big(\langle x(\alpha+\beta i),y\rangle\big)-\mathrm{Re}\big(\langle x(\alpha+\beta i),y\rangle i\big)i=\mathrm{Re}\big(\langle x,y\rangle\big)\alpha+\mathrm{Re}\big(\langle x,y\rangle i\big)\beta-\mathrm{Re}\big(\langle x,y\rangle i\big)i\alpha+\mathrm{Re}\big(\langle x,y )\rangle\big)i\beta=\langle x,y\rangle_ip.$\\

\noindent (ii) $\langle y,x\rangle_i=\mathrm{Re}\big(\langle y,x\rangle\big)-\mathrm{Re}\big(\langle y,x\rangle i\big)i=\mathrm{Re}\big(\overline{\langle x,y\rangle}\big)+\mathrm{Re}\big(\overline{i\langle x,y\rangle}\big)i=\mathrm{Re}\big(\langle x,y\rangle\big)+\mathrm{Re}\big(i\langle x,y\rangle\big)i=\mathrm{Re}\big(\langle x,y\rangle\big)+\mathrm{Re}\big(\langle x,y\rangle i\big)i=\overline{\langle x,y\rangle_i}.$\\

\noindent (iii) $\langle x,x\rangle_i=\langle x,x\rangle\geq0$ and $\langle x,x\rangle_i=0\Leftrightarrow \langle x,x\rangle=0 \Leftrightarrow x=0$.
\end{proof}
\begin{definition}
Let $i\in\mathbb{S}$. We denote the space $\mathcal{H}$ considered as a complex vector space over the complex field $\mathbb{C}_i$ by $\mathcal{H}_i$ with respect to the structure induced by the right quaternionic Hilbert space $\mathcal{H}$; its sum is the sum of $\mathcal{H}$, its complex scalar multiplication is the right scalar multiplication of $\mathcal{H}$ restricted to $\mathbb{C}_i$.
\end{definition}
\begin{corollary}
\label{6c1}
For any $i\in\mathbb{S}$, the map $\langle\cdot,\cdot\rangle_i$ is an inner product on $\mathcal{H}_i$. Moreover, $\mathcal{H}_i$ is a complex Hilbert space with respect to the norm $\Vert\cdot\Vert_i:=\sqrt{\langle\cdot,\cdot\rangle_i}$.
\end{corollary}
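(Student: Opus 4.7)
The first assertion is essentially free from Lemma \ref{6l1}. Since the complex scalar multiplication on $\mathcal{H}_i$ is defined as the right $\mathbb{H}$-multiplication restricted to $\mathbb{C}_i$, the three properties (i), (ii), (iii) of $\langle\cdot,\cdot\rangle_i$ proved there are exactly the axioms for $\langle\cdot,\cdot\rangle_i$ to be a Hermitian inner product on the complex vector space $\mathcal{H}_i$ over $\mathbb{C}_i$. So the plan is just to note this and move on to the completeness statement.

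For the norm, the key observation is that $\langle x,x\rangle_i$ coincides with $\langle x,x\rangle$ for every $x\in\mathcal{H}$. Indeed, unfolding the definition gives $\langle x,x\rangle_i=\mathrm{Re}(\langle x,x\rangle)-\mathrm{Re}(\langle x,x\rangle i)i$, and since $\langle x,x\rangle$ is a non-negative real by the third axiom of the quaternionic inner product, we have $\mathrm{Re}(\langle x,x\rangle)=\langle x,x\rangle$ and $\mathrm{Re}(\langle x,x\rangle i)=0$. Consequently $\Vert x\Vert_i=\sqrt{\langle x,x\rangle_i}=\sqrt{\langle x,x\rangle}=\Vert x\Vert$ for every $x\in\mathcal{H}$; that is, the complex norm on $\mathcal{H}_i$ coincides as a real-valued function with the quaternionic norm on $\mathcal{H}$.

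Completeness is then immediate: a Cauchy sequence in $(\mathcal{H}_i,\Vert\cdot\Vert_i)$ is literally a Cauchy sequence in $(\mathcal{H},\Vert\cdot\Vert)$, which converges in $\mathcal{H}$ because $\mathcal{H}$ is a quaternionic Hilbert space, and the limit lies in the same underlying set $\mathcal{H}_i$. Hence $\mathcal{H}_i$ is a complex Hilbert space over $\mathbb{C}_i$.

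There is no real obstacle here; the only point worth checking carefully is that the complex-scalar-multiplication structure on $\mathcal{H}_i$ is compatible with the first-variable linearity shown in Lemma \ref{6l1}, which is built into the definition of $\mathcal{H}_i$ as the restriction of the quaternionic action to $\mathbb{C}_i$.
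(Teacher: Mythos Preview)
Your proof is correct and follows exactly the line the paper intends: the corollary is stated without proof as an immediate consequence of Lemma~\ref{6l1}, and the norm identity $\Vert\cdot\Vert_i=\Vert\cdot\Vert$ you use for completeness is precisely the content of the Remark the paper places right after the corollary. There is nothing to add.
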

\begin{remark}
Let $x,y\in \mathcal{H}$ and let $i\in\mathbb{S}$. Then 
\begin{itemize}
\item[(i)] If $\langle x,y\rangle\in\mathbb{C}_i$. Then $\langle x,y\rangle_i=\langle x,y\rangle$.
\item[(ii)] $\langle x,x\rangle_i=\langle x,x\rangle$, hence $\Vert \cdot\Vert_i=\Vert \cdot\Vert$ where $\Vert \cdot\Vert$ is the norm on $\mathcal{H}$ and then the topologies of $\mathcal{H}$ and $\mathcal{H}_i$ coincide.
\end{itemize}
\end{remark}
\begin{definition}
\label{6d1}
Let $\Omega$ be a nonempty axially symmetric Borel subset of $\mathbb{H}$ and let $\mathcal{A}_{\Omega}^s$ be the $\sigma$-algebra of axially symmetric Borel subsets of $\Omega$. A spectral measure in a right quaternionic Hilbert space $\mathcal{H}$ is a mapping $E:\mathcal{A}_{\Omega}^s\rightarrow \mathcal{B}_R(\mathcal{H})$ such that
\begin{itemize}
\item[(i)] $E(\omega)$ is a bounded orthogonal projection for every $\omega\in\mathcal{A}_{\Omega}^s$;
\item[(ii)] $E(\emptyset) = 0$ and $E(\Omega) = I$;
\item[(iii)] $E(\omega_1\cap\omega_2) = E(\omega_1)E(\omega_2)$ for every $\omega_1,\omega_2\in\mathcal{A}_{\Omega}^s$;
\item[(iv)] $E(\cup_{n}\omega_n) =\sum_{n} E(\omega_n)$ whenever $\{\omega_n\}$ is a countable collection of pairwise disjoint sets in $\mathcal{A}_{\Omega}^s$ (i.e., $E$ is countably additive).
\end{itemize} 
\end{definition}

Let $T\in\mathcal{B}_R(\mathcal{H})$ and $i\in\mathbb{S}$. Let $T_i$ be the linear operator on $\mathcal{H}_i$ defined by $T_i(x):=Tx$ for all $x\in \mathcal{H}_i$. Clearly, $T_i\in\mathcal{B}(\mathcal{H}_i)$. In order to show the spectral theorem, we need the following lemma.
\begin{lemma}
\label{6l4}
Let $T\in\mathcal{B}_R(\mathcal{H})$ be normal and let $i\in\mathbb{S}$. Then $(T_i)^*=(T^*)_i$, and furthermore, if $T$ is normal, then so is $T_i$.
\end{lemma}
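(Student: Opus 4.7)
The plan is to verify the adjoint identity $(T_i)^* = (T^*)_i$ by direct computation against the $\mathbb{C}_i$-valued inner product of Lemma \ref{6l1}, and then deduce normality of $T_i$ as a formal consequence.

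First, I would observe that the statement $(T_i)^* = (T^*)_i$ does not actually require normality; it holds for every $T \in \mathcal{B}_R(\mathcal{H})$. Starting from the defining formula
$$\langle u,v\rangle_i = \mathrm{Re}(\langle u,v\rangle) - \mathrm{Re}(\langle u,v\rangle i)\, i,$$
I would take arbitrary $x,y\in\mathcal{H}_i$ and compute
$$\langle T_i x, y\rangle_i = \mathrm{Re}(\langle Tx,y\rangle) - \mathrm{Re}(\langle Tx,y\rangle i)\, i.$$
Applying the quaternionic adjoint relation $\langle Tx,y\rangle = \langle x, T^*y\rangle$ inside each real part and then re-folding by the same formula gives $\langle T_ix,y\rangle_i = \langle x, T^*y\rangle_i = \langle x,(T^*)_i y\rangle_i$. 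Combined with Corollary \ref{6c1}, which asserts that $\mathcal{H}_i$ is a genuine complex Hilbert space (so the adjoint in $\mathcal{B}(\mathcal{H}_i)$ is unique), this forces $(T_i)^* = (T^*)_i$. It is also worth checking in passing that $(T^*)_i$ is indeed $\mathbb{C}_i$-linear on $\mathcal{H}_i$, which is immediate because $T^*$ is already right $\mathbb{H}$-linear and the $\mathbb{C}_i$-scalar multiplication on $\mathcal{H}_i$ is just the restriction of the right $\mathbb{H}$-action.

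Once the adjoint identity is in hand, normality follows formally: for every $x\in\mathcal{H}$,
$$T_i(T_i)^* x = T_i(T^*)_i x = T T^* x = T^* T x = (T^*)_i T_i x = (T_i)^* T_i x,$$
using the normality hypothesis $TT^* = T^*T$ in the middle equality. Hence $T_i$ commutes with its adjoint on $\mathcal{H}_i$, so $T_i$ is normal in the usual complex-Hilbert-space sense.

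I do not anticipate a real obstacle here: the only subtle point is making sure that the reduction to real parts via the identity $\langle u,v\rangle_i = \mathrm{Re}(\langle u,v\rangle) - \mathrm{Re}(\langle u,v\rangle i)i$ preserves the move of $T$ across the inner product, and this is automatic because the quaternionic identity $\langle Tx,y\rangle = \langle x,T^*y\rangle$ holds before taking any real part. Everything else is bookkeeping.
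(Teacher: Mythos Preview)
Your proof is correct and follows essentially the same approach as the paper: a direct computation of $\langle T_i x, y\rangle_i$ via the defining formula for $\langle\cdot,\cdot\rangle_i$ together with the quaternionic adjoint relation, followed by a routine check of normality using $(T_i)^*=(T^*)_i$. Your additional observations (that normality is unnecessary for the adjoint identity, and the explicit appeal to uniqueness of the complex adjoint) are helpful clarifications but do not change the underlying argument.
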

\begin{proof}
Let $x,y\in \mathcal{H}_i$. We have 
\begin{align*}
\langle T_ix,y\rangle_i&=\mathrm{Re}\big(\langle T_ix,y\rangle\big)-\mathrm{Re}\big(\langle T_ix,y\rangle i\big)i=\mathrm{Re}\big(\langle Tx,y\rangle\big)-\mathrm{Re}\big(\langle Tx,y\rangle i\big)i\\
&=\mathrm{Re}\big(\langle x,T^*y\rangle\big)-\mathrm{Re}\big(\langle x,T^*y\rangle i\big)i=\langle x,T^*y\rangle_i=\langle x,(T^*)_iy\rangle_i.
\end{align*}
Therefore, $(T_i)^*=(T^*)_i$. Moreover, if $T$ is normal, then $(T_i)^*T_i=(T^*)_iT_i=(T^*T)_i=(TT^*)_i=T_i(T^*)_i=T_i(T_i)^*$. Hence $T_i$ is normal.
\end{proof}
Let $T\in\mathcal{B}_R(\mathcal{H})$ be normal and let $i\in\mathbb{S}$. By \cite[Theorem 3.15]{6C}, there exists a unique complex spectral measure $E^i$ of the spectral decomposition of $T_i$ \big($E^i:\mathcal{A}_{\sigma(T_i)}\rightarrow \mathcal{B}(\mathcal{H}_i)$ where $\mathcal{A}_{\sigma(T_i)}$ is the $\sigma$-algebra of Borel subsets of $\sigma(T_i)$\big) such that
$$T_i=\int_{\sigma(T_i)}\lambda dE^i_\lambda.$$
That is $\langle  T_ix,y\rangle_i=\int_{\sigma(T_i)}\lambda d\langle E^i_\lambda x, y\rangle_i\;$ for all $x,y\in \mathcal{H}_i$.\\

Let $\omega\in\mathcal{A}_{\sigma(T_i)}$, then $E^i(\omega)$ is linear on $\mathcal{H}_i$. Let $\mathcal{A}_{\sigma(T_i)}^s$ be  the $\sigma$-algebra of Borel symmetric (with respect to the real axis) subsets  of $\sigma(T_i)$, the following lemma tells us that $E^i(\omega)$ is actually right quaternionic linear on $\mathcal{H}$ for all $\omega\in\mathcal{A}_{\sigma(T_i)}^s$.
\begin{lemma}
\label{6l3}
Let $T\in\mathcal{B}_R(\mathcal{H})$ be normal and $i\in\mathbb{S}$. Let $E^i$ be the complex spectral measure of the spectral decomposition of $T_i$. Then
$$E^i(\omega)\in\mathcal{B}_R(\mathcal{H}) \;\;\text{ for all }\;\; \omega\in\mathcal{A}_{\sigma(T_i)}^s.$$
\end{lemma}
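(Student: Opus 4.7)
The plan is to reduce the full right $\mathbb{H}$-linearity of $E^i(\omega)$ to a single commutation relation, and then extract that relation from the uniqueness of the complex spectral measure of $T_i$. Pick any $j \in \mathbb{S}$ with $ij + ji = 0$, so that $\mathbb{H} = \mathbb{C}_i \oplus \mathbb{C}_i j$ as a right $\mathbb{C}_i$-module. Since $E^i(\omega)$ is already additive and $\mathbb{C}_i$-linear on $\mathcal{H}_i$, it suffices to verify the single identity $E^i(\omega)(xj) = E^i(\omega)(x)\, j$ for every $x \in \mathcal{H}$; every $q = \alpha + \beta j \in \mathbb{H}$ is then handled automatically. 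The boundedness requirement is free, since the norms on $\mathcal{H}$ and $\mathcal{H}_i$ coincide.

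To exploit uniqueness, I would introduce the $\mathbb{C}_i$-antilinear map $R_j: \mathcal{H}_i \to \mathcal{H}_i$, $x \mapsto xj$, which satisfies $R_j^2 = -I$, together with the induced conjugation $\Phi: \mathcal{B}(\mathcal{H}_i) \to \mathcal{B}(\mathcal{H}_i)$ given by $\Phi(A) = -R_j A R_j$. Routine checks show that $\Phi$ is a conjugate-linear $*$-automorphism of the complex algebra $\mathcal{B}(\mathcal{H}_i)$, and that $\Phi(T_i) = T_i$ because the right $\mathbb{H}$-linearity of $T$ yields $T(xj) = (Tx)j$, i.e.\ $T_i R_j = R_j T_i$.

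The heart of the argument is then to apply $\Phi$ to the spectral resolution $T_i = \int_{\sigma(T_i)} \lambda\, dE^i_\lambda$. Because $\Phi$ is conjugate-linear, the scalar $\lambda$ picks up a conjugation, giving $T_i = \Phi(T_i) = \int \bar\lambda\, d\Phi(E^i)_\lambda$; a change of variable shows that $\omega \mapsto \Phi(E^i(\bar\omega))$ is another spectral measure for $T_i$. Along the way one observes $\sigma(T_i) = \overline{\sigma(T_i)}$, again a consequence of $\Phi(T_i) = T_i$ together with conjugate-linearity of $\Phi$. The uniqueness clause in \cite[Theorem 3.15]{6C} then forces $\Phi(E^i(\bar\omega)) = E^i(\omega)$ for every Borel $\omega \subseteq \sigma(T_i)$. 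Specializing to $\omega \in \mathcal{A}_{\sigma(T_i)}^s$, for which $\bar\omega = \omega$, this becomes $-R_j E^i(\omega) R_j = E^i(\omega)$, and using $R_j^2 = -I$ this rearranges to $E^i(\omega) R_j = R_j E^i(\omega)$, which is exactly the identity required in the first paragraph.

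The main obstacle I anticipate is bookkeeping around the antilinearity of $R_j$: the conjugation of $\lambda$ inside the integral is essential, since if one tried the same construction with a $\mathbb{C}_i$-linear intertwiner the relation $\Phi(T_i) = T_i$ would become a tautology and produce no information. Once the conjugate-linearity is tracked carefully, the symmetry hypothesis $\bar\omega = \omega$ is exactly what converts the uniqueness statement into the desired commutation of $E^i(\omega)$ with right multiplication by $j$.
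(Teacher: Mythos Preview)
Your argument is correct and takes a genuinely different route from the paper. Both proofs begin with the same reduction: choose $j\in\mathbb{S}$ with $ij=-ji$ and show that $E^i(\omega)$ commutes with right multiplication by $j$. From there the paper invokes local spectral theory in the sense of Dunford--Schwartz: it identifies $\mathcal{R}\big(E^i(\overline{\omega})\big)$ with $\{x:\sigma(x)\subseteq\overline{\omega}\}$, proves $\sigma(xj)=\sigma(x)^*$ by building the analytic extension $z\mapsto f(\bar z)j$ of the local resolvent, deduces $\mathcal{R}\big(E^i(\overline{\omega})\big)j=\mathcal{R}\big(E^i(\overline{\omega})\big)$ for closed symmetric $\omega$, and then runs a $\sigma$-algebra argument to pass from closed sets to all of $\mathcal{A}_{\sigma(T_i)}^s$. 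Your approach bypasses all of this machinery: the anti-unitary $R_j$ gives a conjugate-linear $*$-automorphism $\Phi$ of $\mathcal{B}(\mathcal{H}_i)$ fixing $T_i$, and the uniqueness clause of the complex spectral theorem forces $\Phi\big(E^i(\bar\omega)\big)=E^i(\omega)$ in one stroke, with the symmetry hypothesis $\bar\omega=\omega$ collapsing this to the desired commutation. Your proof is shorter and more elementary (no local spectra, no extension from closed sets), and it makes the role of the symmetry condition completely transparent; the paper's argument, on the other hand, yields the finer intermediate information $\sigma(xj)=\sigma(x)^*$ and the range identity $\mathcal{R}\big(E^i(\omega)\big)j=\mathcal{R}\big(E^i(\omega)\big)$, which may be of independent interest.
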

\begin{proof}
An analytic extension of $z\mapsto (I_iz-T_i)^{-1}x$ with $x\in\mathcal{H}_i$ is a holomorphic function $f$ defined on a set $\mathcal{D}(f)\supset\rho(T_i)$ \big(where $\rho(T_i)$ is the resolvent set of $T_i$\big) such that $(I_iz-T_i)f(z)=x$ for all $z\in \mathcal{D}(f)$. $T_i$ is a bounded spectral operator in $\mathcal{H}_i$, then the function $z\mapsto (I_iz-T_i)^{-1}x$ has a unique maximal analytic extension. The local resolvent $\rho(x)$ is the domain of the unique maximal analytic extension of $z\mapsto(I_iz-T_i)^{-1}x$, and the local spectrum $\sigma(x)$ is the complement of $\rho(x)$ in $\mathbb{C}_i$ \big(see, \cite[Chapter XV]{6Dun}\big).\\
Let $\omega\in\mathcal{A}_{\sigma(T_i)}^s$ and let $\overline{\omega}$ be the closure of $\omega$. By \cite[ Theorem XV.3.4]{6Dun}, we have
$$\mathcal{R}\big(E^i(\overline{\omega})\big)=\{x\in\mathcal{H}_i:\sigma(x)\subseteq\overline{\omega}\}.$$
Let $j\in\mathbb{S}$ such that $ij=-ji$, then $\{1,i,j,ij\}$ is a basis of $\mathbb{H}$. It is sufficient to show that $E^i(\omega)(xj)=E^i(\omega)(x)j$ for all $x\in\mathcal{H}$.\\
Let $f$ be the unique maximal analytic extension of $z\mapsto(I_iz-T_i)^{-1}x$ defined on $\rho(x)$. The mapping $g:z\mapsto f(\bar{z})j$ defined on $\rho(x)^*:=\{\bar{z}:z\in\rho(x)\}$ is holomorphic. Indeed,
$$\lim_{h\rightarrow 0}\big(f(\overline{z+h})j-f(\bar{z})j\big)\frac{1}{h}=\lim_{h\rightarrow 0}\big(f(\bar{z}+\bar{h})-f(\bar{z})\big)\frac{1}{\bar{h}}j=f^\prime(\bar{z})j.$$
Moreover, if $z\in\rho(x)^*$, then
\begin{align*}
(I_iz-T_i)g(z)&=(I_iz-T_i)(f(\bar{z})j)=f(\bar{z})jz-T(f(\bar{z})j)=f(\bar{z})\bar{z}j-T(f(\bar{z}))j\\
&=\big((I_i\bar{z}-T_i)f(\bar{z})\big)j=xj.
\end{align*}
Hence $z\mapsto f(\bar{z})j$ is an analytic extension of $z\mapsto(I_iz-T_i)^{-1}(xj)$ that is defined on $\rho(x)^*$. Consequently $\rho(x)^*\subseteq\rho(xj)$. Similar arguments show that $\rho(xj)^*\subseteq\rho(-x)=\rho(x)$. Altogether, we obtain $\rho(x)=\rho(xj)^*$, and so $\sigma(xj)=\sigma(x)^*$. Thus
\begin{align}
\mathcal{R}\big(E^i(\overline{\omega})\big)=\{x\in\mathcal{H}_i:\sigma(x)\subseteq\overline{\omega}\}=\{xj\in\mathcal{H}_i:\sigma(x)\subseteq\overline{\omega}\}=\mathcal{R}\big(E^i(\overline{\omega})\big)j.
\label{6eq3}
\end{align}
If we choose an increasing sequence of closed sets $\omega_n\in\mathcal{A}_{\sigma(T_i)}^s$ with $\cup \omega_n=\sigma(T_i)\setminus\overline{\omega}$, we therefore have $E^i\big(\sigma(T_i)\setminus\overline{\omega}\big)x=\lim_{n\rightarrow\infty}E^i(\omega_n)x$ for all $x\in\mathcal{H}$. Hence by (\ref{6eq3}), it follows that
\begin{align}
\mathcal{R}\big(E^i(\sigma(T_i)\setminus\overline{\omega})\big)=\mathcal{R}\big(E^i(\sigma(T_i)\setminus\overline{\omega})\big)j.
\label{6eq4}
\end{align}
Now observe that 
$$\mathcal{A}:=\{\Lambda\in\mathcal{A}_{\sigma(T_i)}^s:\mathcal{R}\big(E^i(\Lambda)\big)=\mathcal{R}\big(E^i(\Lambda)\big)j\;\text{ and }\;\mathcal{R}\big(E^i(\sigma(T_i)\setminus\Lambda)\big)=\mathcal{R}\big(E^i(\sigma(T_i)\setminus\Lambda)\big)j\}$$
is a $\sigma$-algebra that contains closed sets. Hence $\mathcal{A}=\mathcal{A}_{\sigma(T_i)}^s.$ Therefore,
\begin{align}
\mathcal{R}\big(E^i(\omega)\big)=\mathcal{R}\big(E^i(\omega)\big)j,
\end{align} 
and so
$$\mathcal{N}\big(E^i(\omega)\big)=\mathcal{R}\big(E^i(\sigma(T_i)\setminus\omega)\big)=\mathcal{R}\big(E^i(\sigma(T_i)\setminus\omega)\big)j=\mathcal{N}\big(E^i(\omega)\big)j.$$
Since $\mathcal{H}=\mathcal{R}\big(E^i(\omega)\big)\oplus\mathcal{N}\big(E^i(\omega)\big)$, $xj=E^i(\omega)(xj)+\big(I-E^i(\omega)\big)(xj)$ and  $xj=E^i(\omega)(x)j+\big(I-E^i(\omega)\big)(x)j$ for all $x\in\mathcal{H}$. It follows that $E^i(\omega)(xj)=E^i(\omega)(x)j$. This completes the proof.
\end{proof}
Let $T\in\mathcal{B}_R(\mathcal{H})$ be normal and let $i,j\in\mathbb{S}$. The following lemma gives a relation between the complex spectral measures $E^i$ and $E^j$ of the spectral decompositions of $T_i$ and $T_j$, respectively.
\begin{lemma}
\label{6l2}
Let $i,j\in\mathbb{S}$ and $s\in\mathbb{H}\setminus\{0\}$ such that $s^{-1}i s=j$. Let $T\in\mathcal{B}_R(\mathcal{H})$ be normal, and let $E^i$ and $E^j$ be the complex spectral measures of the spectral decompositions of $T_i$ and $T_j$, respectively. Then
$$E^i(\omega)=E^j\big(s^{-1}\omega s\big) \;\;\text{ for all }\;\; \omega\in\mathcal{A}_{\sigma(T_i)}^s,$$
where $s^{-1}\omega s:=\{s^{-1}q s:q\in\omega\}$.
\end{lemma}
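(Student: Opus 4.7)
The plan is to use the right-multiplication operator $R_s:\mathcal{H}\to\mathcal{H}$, $R_s(x):=xs$, as an intertwiner between the complex Hilbert space structures $\mathcal{H}_i$ and $\mathcal{H}_j$, and then invoke uniqueness of the classical complex spectral measure. Without loss of generality take $|s|=1$, so $s^{-1}=\bar s$. The relation $s^{-1}is=j$, i.e.\ $is=sj$, implies that $\sigma_s:p\mapsto s^{-1}ps$ is an $\mathbb{R}$-algebra isomorphism $\mathbb{C}_i\to\mathbb{C}_j$ sending $i$ to $j$ and commuting with complex conjugation, and that $R_s(xp)=R_s(x)\sigma_s(p)$ for $p\in\mathbb{C}_i$. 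A short computation from $\langle xs,ys\rangle=\bar s\langle x,y\rangle s$ and $sj=is$ yields
$$\langle R_sx,R_sy\rangle_j=\sigma_s(\langle x,y\rangle_i),$$
so that $R_s:\mathcal{H}_i\to\mathcal{H}_j$ is a $\sigma_s$-semi-linear isometric bijection; right quaternionic linearity of $T$ gives $R_sT_i=T_jR_s$, and the resulting identity $(T_j-\sigma_s(\lambda)I)R_s=R_s(T_i-\lambda I)$ forces $\sigma(T_j)=s^{-1}\sigma(T_i)s$, so $\omega\mapsto s^{-1}\omega s$ is a bijection $\mathcal{A}_{\sigma(T_i)}^s\to\mathcal{A}_{\sigma(T_j)}^s$.

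Next, define $F:\mathcal{A}_{\sigma(T_j)}^s\to\mathcal{B}(\mathcal{H}_j)$ by $F(\omega'):=E^i(s\omega's^{-1})$. Showing that $F$ is a complex spectral measure on $\mathcal{H}_j$ realizing $T_j$ will, by the uniqueness part of the classical spectral theorem, yield $F=E^j$; this is equivalent to the desired identity $E^i(\omega)=E^j(s^{-1}\omega s)$. The projection and measure-theoretic properties of $F$ transfer routinely from those of $E^i$: by Lemma~\ref{6l3}, $F(\omega')\in\mathcal{B}_R(\mathcal{H})$, hence is $\mathbb{C}_j$-linear on $\mathcal{H}_j$; idempotency is structure-free; Lemma~\ref{6l4} promotes the self-adjointness of $E^i(s\omega's^{-1})$ in $\mathcal{H}_i$ to quaternionic self-adjointness and thence to self-adjointness in $\mathcal{H}_j$; and multiplicativity, countable additivity, together with the boundary values $F(\emptyset)=0$ and $F(\sigma(T_j))=I$, follow because $\omega'\mapsto s\omega's^{-1}$ is a Boolean isomorphism mapping $\sigma(T_j)$ onto $\sigma(T_i)$.

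The only substantive step, and the main obstacle, is the integral identity $T_j=\int_{\sigma(T_j)}\mu\,dF_\mu$ on $\mathcal{H}_j$. For $x,y\in\mathcal{H}_j$, set $u:=R_s^{-1}x$ and $v:=R_s^{-1}y$ in $\mathcal{H}_i$; the semi-unitarity and the intertwining $T_jR_s=R_sT_i$ give $\langle T_jx,y\rangle_j=\sigma_s(\langle T_iu,v\rangle_i)$. Inserting the spectral decomposition of $T_i$ on $\mathcal{H}_i$, pulling the continuous $\mathbb{R}$-linear map $\sigma_s$ inside the integral (justified by decomposing the $\mathbb{C}_i$-valued complex measure $\langle E^i(\cdot)u,v\rangle_i$ into its real and imaginary parts), changing variable $\mu=\sigma_s(\lambda)$, and using that $E^i(\omega)\in\mathcal{B}_R(\mathcal{H})$ commutes with the right-multiplication $R_s$, one arrives at $\int_{\sigma(T_j)}\mu\,d\langle F(\omega')x,y\rangle_j$, as required. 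Uniqueness of the spectral measure of $T_j$ then concludes the proof.
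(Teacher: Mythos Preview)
Your approach via the right-multiplication intertwiner $R_s$ is genuinely different from the paper's. The paper argues through real moments: since $\langle p(T)^*p(T)x,x\rangle\in\mathbb{R}$ for $p(T)=P(T,T^*)$ with $P\in\mathbb{R}[X,Y]$, the integrals $\int|p(\lambda)|^2\,d\langle E^i_\lambda x,x\rangle$ and $\int|p(\lambda)|^2\,d\langle E^j_{s^{-1}\lambda s}x,x\rangle$ coincide; Stone--Weierstrass then passes from the functions $|p|^2$ to characteristic functions, and Lemma~\ref{6l3} upgrades the resulting scalar identity $\langle E^i(\omega)x,x\rangle=\langle E^j(s^{-1}\omega s)x,x\rangle$ to an operator identity. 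Your transport argument is more structural and bypasses Stone--Weierstrass entirely.

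There is, however, a gap in the execution. You define $F$ only on $\mathcal{A}^s_{\sigma(T_j)}$, the \emph{symmetric} Borel sets, because Lemma~\ref{6l3} (which you invoke for the $\mathbb{C}_j$-linearity of $E^i(s\omega's^{-1})$) applies only to symmetric $\omega$. But then the expression $\int_{\sigma(T_j)}\mu\,d\langle F(\omega')x,y\rangle_j$ is ill-defined: the identity function $\mu\mapsto\mu$ is not measurable with respect to the symmetric-set $\sigma$-algebra. Moreover, the uniqueness clause of the classical spectral theorem concerns projection-valued measures on the \emph{full} Borel $\sigma$-algebra of $\sigma(T_j)$, so even granting an integral identity you could not conclude $F=E^j\vert_{\mathcal{A}^s_{\sigma(T_j)}}$ from it. Concretely, the step ``using that $E^i(\omega)\in\mathcal{B}_R(\mathcal{H})$ commutes with $R_s$'' in your final paragraph is available only for symmetric $\omega$, whereas the change-of-variable computation naturally produces a measure on all Borel sets.

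The repair is immediate within your framework: set $\widetilde F(\omega'):=R_s\,E^i(s\omega's^{-1})\,R_s^{-1}$ for \emph{every} Borel $\omega'\subseteq\sigma(T_j)$. The $\sigma_s$-semi-linearity of $R_s$ by itself makes each $\widetilde F(\omega')$ a $\mathbb{C}_j$-linear orthogonal projection on $\mathcal{H}_j$, with no appeal to Lemma~\ref{6l3}; your own computation then reads $\sigma_s(\langle E^i(s\omega's^{-1})u,v\rangle_i)=\langle\widetilde F(\omega')x,y\rangle_j$ for all Borel $\omega'$, giving $T_j=\int\mu\,d\widetilde F_\mu$, and classical uniqueness yields $\widetilde F=E^j$ on the full Borel $\sigma$-algebra. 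Only at this final stage, for symmetric $\omega'$, does Lemma~\ref{6l3} enter: it gives $E^i(s\omega's^{-1})\in\mathcal{B}_R(\mathcal{H})$, hence commuting with $R_s$, so that $E^j(\omega')=\widetilde F(\omega')=E^i(s\omega's^{-1})$, which is the desired identity.
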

\begin{proof}
Let $x\in \mathcal{H}$ and let $P\in\mathbb{R}[X,Y]$. Define 
\begin{align}
p(T):=P(T,T^*)\;\;\text{ and }\;\;  p(\lambda):=P(\lambda,\bar{\lambda})\;\;\text{ for all }\lambda\in\mathbb{H}.\label{6eq2}
\end{align}
We have  
$$\langle p(T_j)^*p(T_j) x,x\rangle_j=\int_{\sigma(T_j)}\vert p(\lambda)\vert^2 d\langle E^j_\lambda x, x\rangle_j=\int_{\sigma(T_j)}\vert p(\lambda)\vert^2 d\langle E^j_\lambda x, x\rangle.$$
By \cite[Proposition 3.8]{6BB}, it follows that $\sigma(T_j)=s^{-1}\sigma(T_i)s$, hence
$$\int_{\sigma(T_j)}\vert p(\lambda)\vert^2 d\langle E^j_\lambda x, x\rangle=\int_{\sigma(T_i)}\vert p(\lambda)\vert^2 d\langle E^j_{s^{-1}\lambda s} x, x\rangle.$$
On the other hand, we have
$$\langle p(T_i)^*p(T_i) x,x\rangle_i=\int_{\sigma(T_i)}\vert p(\lambda)\vert^2 d\langle E^i_\lambda x, x\rangle_i=\int_{\sigma(T_i)}\vert p(\lambda)\vert^2 d\langle E^i_\lambda x, x\rangle.$$
Since $\langle p(T)^*p(T) x,x\rangle\in\mathbb{R}$, $\langle p(T_i)^*p(T_i) x,x\rangle_i=\langle p(T_j)^*p(T_j) x,x\rangle_j$. Thus
$$\int_{\sigma(T_i)}\vert p(\lambda)\vert^2 d\langle E^j_{s^{-1}\lambda s} x, x\rangle=\int_{\sigma(T_i)}\vert p(\lambda)\vert^2 d\langle E^i_\lambda x, x\rangle.$$
Set 
$$\mu_x(\omega):=\langle E^i(\omega) x, x\rangle\;\;\text{ and }\;\; \nu_x(\omega):=\langle E^j(s^{-1}\omega s) x, x\rangle\;\; \text{ for all }\;\;\omega\in\mathcal{A}_{\sigma(T_i)}.$$
Let $\mathcal{P}$ be the set of all polynomials $P$ in the real variables $\alpha,\beta$ and with real coefficients $\sigma(T_i):\;\lambda=\alpha+\beta i\mapsto P(\alpha,\beta)$. By the Stone-Weierstrass Theorem for real functions, $\mathcal{P}$ is dense in the Banach space $\big(\mathcal{C}(\sigma(T_i)),\Vert\cdot\Vert_\infty\big)$ \big(the set of all real-valued continuous functions on $\sigma(T_i)$\big). Since $\sigma(T_i)$ is a compact set in $\mathbb{C}_i$ and $\mu_x$ is a finite positive measure on $\mathcal{A}_{\sigma(T_i)}$, it follows that $\mathcal{P}$ is also dense in the normed space $\big(\mathcal{C}(\sigma(T_i)),\Vert\cdot\Vert_2\big)$, which in turn is a dense linear manifold of the Banach space $\big(L^2(\sigma(T_i),\mu_x), \Vert\cdot\Vert_2\big)$, and hence $\mathcal{P}$ is dense in $\left(L^2\big(\sigma(T_i),\mu_x\big), \Vert\cdot\Vert_2\right)$. The same conclusion holds for $\nu_x$. It follows that
$$\int_{\sigma(T_i)}\chi_\omega(\lambda) d\langle E^j_{s^{-1}\lambda s} x, x\rangle=\int_{\sigma(T_i)}\chi_\omega(\lambda) d\langle E^i_\lambda x, x\rangle\;\;\text{ for all }\;\; \omega\in\mathcal{A}_{\sigma(T_i)}.$$
That is $\langle E^j(s^{-1}\omega s) x, x\rangle=\langle E^i(\omega) x, x\rangle$ for all $\omega\in\mathcal{A}_{\sigma(T_i)}$.\\ Let $\omega\in\mathcal{A}_{\sigma(T_i)}^s$. By Lemma \ref{6l3}, $E^i(\omega)\in\mathcal{B}_R(\mathcal{H})$ and so $E^i(\omega)$ is $\mathbb{C}_j$-linear. Thus $\langle \big(E^i(\omega)-E^j(s^{-1}\omega s)\big) x, x\rangle_j=\langle \big(E^i(\omega)-E^j(s^{-1}\omega s)\big) x, x\rangle=0$ for all $x\in\mathcal{H}_j$, hence $E^i(\omega)-E^j(s^{-1}\omega s)=0$ (see, e.g., \cite[Theorem 2.14]{6C}). It follows that $E^i(\omega)=E^j(s^{-1}\omega s)$. This completes the proof.
\end{proof}
We are now ready to show the main result of this paper: the canonical decomposition of a bounded normal right quaternionic linear operator, the quaternionic analogue of \cite[Theorem 3.15]{6C}.
\begin{theorem}
\label{6t1}
Let $T\in\mathcal{B}_R(\mathcal{H})$ be normal and let $J$ be the imaginary component in the $T = A+JB$ decomposition of $T$. There exists a unique spectral measure $E$ in $\mathcal{H}$ such that 
$$\langle  Tx,y\rangle=\int_{\sigma_S(T)}\text{Re}(\lambda) d\langle E_\lambda x, y\rangle+\int_{\sigma_S(T)}\vert\text{Im}(\lambda)\vert d\langle E_\lambda Jx, y\rangle\;\text{ for all }\; x,y\in\mathcal{H}.$$
Notation: $$T=\int_{\sigma_S(T)}\lambda dE_\lambda.$$
\end{theorem}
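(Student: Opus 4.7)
My plan is to construct $E$ by pulling back, for a fixed auxiliary $i\in\mathbb{S}$, the classical complex spectral measure of $T_i$ along the natural bijection between axially symmetric Borel subsets of $\sigma_S(T)$ and real-axis-symmetric Borel subsets of $\sigma(T_i)$.

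First I fix $i\in\mathbb{S}$. By Lemma \ref{6l4}, $T_i$ is normal on the complex Hilbert space $\mathcal{H}_i$, so the classical complex spectral theorem \cite[Theorem 3.15]{6C} yields a unique complex spectral measure $E^i$ with $T_i=\int_{\sigma(T_i)}\lambda\,dE^i_\lambda$. Using $\sigma(T_i)=\sigma_S(T)\cap\mathbb{C}_i$ (cf.~\cite[Proposition 3.8]{6BB}) together with the axial symmetry of $\sigma_S(T)$, the map $\Omega\mapsto\Omega\cap\mathbb{C}_i$ gives a bijection between $\mathcal{A}_{\sigma_S(T)}^s$ (axially symmetric Borel subsets of $\sigma_S(T)$) and $\mathcal{A}_{\sigma(T_i)}^s$ (Borel subsets of $\sigma(T_i)$ symmetric about the real axis). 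I would then set $E(\Omega):=E^i(\Omega\cap\mathbb{C}_i)$. By Lemma \ref{6l3}, $E(\Omega)\in\mathcal{B}_R(\mathcal{H})$, and the remaining axioms of Definition \ref{6d1} transfer directly from those of $E^i$. Lemma \ref{6l2} ensures independence of the auxiliary choice of $i$: if $j=s^{-1}is$, then $s^{-1}(\Omega\cap\mathbb{C}_i)s=\Omega\cap\mathbb{C}_j$ for axially symmetric $\Omega$, so $E^j(\Omega\cap\mathbb{C}_j)=E^i(\Omega\cap\mathbb{C}_i)$.

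For the integral identity, I apply the spectral calculus of $T_i$ to the axially symmetric real functions $\mathrm{Re}(\lambda)$ and $|\mathrm{Im}(\lambda)|$ and set
$$A := \int_{\sigma_S(T)}\mathrm{Re}(\lambda)\,dE_\lambda, \qquad \tilde B := \int_{\sigma_S(T)}|\mathrm{Im}(\lambda)|\,dE_\lambda.$$
Both operators lie in $\mathcal{B}_R(\mathcal{H})$ (approximate by symmetric simple functions and invoke Lemma \ref{6l3}), and $A=(T+T^*)/2$ by the standard spectral calculus. The crucial step is identifying $\tilde B$ with the $B$ from the decomposition $T=A+JB$ of \cite[Theorem 9.3.5]{6CGK}: spectral calculus gives
$$(T-T^*)^2=\int_{\sigma(T_i)}(\lambda-\bar\lambda)^2\,dE^i_\lambda=-4\tilde B^2,$$
whereas $T-T^*=2JB$, $JB=BJ$, and $J^2\vert_{\mathcal{R}(B)}=-I$ yield $(T-T^*)^2=-4B^2$; uniqueness of the positive square root of a positive operator therefore forces $\tilde B=B$. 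Combined with $\tilde B J=BJ=JB$, one obtains
$$\int\mathrm{Re}(\lambda)\,d\langle E_\lambda x,y\rangle+\int|\mathrm{Im}(\lambda)|\,d\langle E_\lambda Jx,y\rangle=\langle Ax+\tilde B Jx,y\rangle=\langle(A+JB)x,y\rangle=\langle Tx,y\rangle,$$
which is precisely the claimed identity.

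For uniqueness of $E$, any spectral measure $F$ on $\mathcal{A}_{\sigma_S(T)}^s$ satisfying the displayed formula induces, via $\Omega\mapsto\Omega\cap\mathbb{C}_i$, a projection-valued measure on real-axis-symmetric Borel subsets of $\sigma(T_i)$; one checks this measure is determined by the integrals of $\mathrm{Re}$ and $|\mathrm{Im}|$ and must coincide with $E^i$ by the uniqueness half of the classical theorem, hence $F=E$. The principal obstacle I anticipate is the bridge $\tilde B=B$ together with the compatibility $\tilde B J=JB$ with the intrinsically defined $J$ of \cite[Theorem 9.3.5]{6CGK}; once that identification is in place, the $\mathbb{H}$-linearity and measure-theoretic axioms are handled uniformly through Lemmas \ref{6l1}--\ref{6l3}.
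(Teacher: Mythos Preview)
Your construction of $E$ via $E(\Omega)=E^i(\Omega\cap\mathbb{C}_i)$, the appeal to Lemmas~\ref{6l3} and~\ref{6l2} for quaternionic linearity and independence of $i$, and the reduction of uniqueness to the complex spectral theorem are exactly the paper's argument. The only genuine difference lies in how the integral identity is obtained. The paper starts from the given $A,B$ in the $T=A+JB$ decomposition of \cite[Theorem~9.3.5]{6CGK} and, choosing for each pair $x,y$ an $i\in\mathbb{S}$ with $\langle Tx,y\rangle\in\mathbb{C}_i$, reads off $\langle A_ix,y\rangle_i=\int\mathrm{Re}(\lambda)\,d\langle E^i_\lambda x,y\rangle_i$ and $\langle B_iJx,y\rangle_i=\int|\mathrm{Im}(\lambda)|\,d\langle E^i_\lambda Jx,y\rangle_i$ directly from the complex functional calculus, then uses a polarization argument to pass from $\langle\cdot,\cdot\rangle_i$ to $\langle\cdot,\cdot\rangle$. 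You instead run the argument in the opposite direction: you define $A$ and $\tilde B$ as spectral integrals and then identify $\tilde B$ with the $B$ of the decomposition via uniqueness of the positive square root of $-\tfrac14(T-T^*)^2$. Both routes are correct; yours makes the square-root step explicit (the paper leaves it implicit in the assertion $B_i=\int|\mathrm{Im}(\lambda)|\,dE^i_\lambda$), while the paper's choice of an adapted $i$ handles more cleanly the passage from the $\mathbb{C}_i$-valued to the $\mathbb{H}$-valued inner product, a point you pass over. Your uniqueness sketch is a bit thin---``determined by the integrals of $\mathrm{Re}$ and $|\mathrm{Im}|$'' amounts to the Stone--Weierstrass density argument the paper spells out in part~(iii)---but the idea is the right one.
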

\begin{proof}
We shall split the proof into three parts.\\

\noindent (i) Let $i\in\mathbb{S}$ and let $E^i$ be the complex spectral measure of the spectral decomposition of $T_i$. We claim that the map $E:\mathcal{A}_{\sigma_S(T)}^s \rightarrow\mathcal{B}_R(\mathcal{H})$ defined by
$E\big(\Omega\big)=E^i\big(\Omega\cap\mathbb{C}_i\big)$ is a spectral measure in $\mathcal{H}$.\\
To prove this, first note that $\mathcal{A}_{\sigma(T_i)}^s=\mathcal{A}_{\sigma_S(T)}^s\cap\mathbb{C}_i:=\{\Omega\cap\mathbb{C}_i:\Omega\in\mathcal{A}_{\sigma_S(T)}^s\}$. Indeed, $\sigma(T_i)=\sigma_S(T)\cap\mathbb{C}_i$ \big(see, \cite[Proposition 3.8]{6BB}\big) and $\mathbb{C}_i$ is closed in $\mathbb{H}$. Then one can easily check that $E$ is a spectral measure in $\mathcal{H}$.\\
Now, let us prove that $E$ is independent of the choice of the imaginary unit $i\in\mathbb{S}$. Let $j\in\mathbb{S}$, $s\in\mathbb{H}\setminus\{0\}$ such that $s^{-1}i s=j$ and $E^j$ be the complex spectral measure of the spectral decomposition of $T_j$. We have  
$$\Omega\cap\mathbb{C}_j=s^{-1}\big(\Omega\cap\mathbb{C}_i\big)s\;\;\text{ for all }\;\;\Omega\in\mathcal{A}_{\sigma_S(T)}^s.$$
Hence by Lemma \ref{6l2}, 
$$E\big(\Omega\big)=E^i\big(\Omega\cap\mathbb{C}_i\big)=E^j\big(s^{-1}\big(\Omega\cap\mathbb{C}_i\big)s\big)=E^j\big(\Omega\cap\mathbb{C}_j\big).$$

\noindent (ii) Let $A$, $B$, and $J$ be as in \cite[Theorem 9.3.5]{6CGK}. Let $x,y\in \mathcal{H}$ and let $i\in\mathbb{S}$ such that $\langle T x,y\rangle\in\mathbb{C}_i$. Then 
\begin{align*}
\langle  A_ix,y\rangle_i=\int_{\sigma(T_i)}\text{Re}(\lambda) d\langle E^i_\lambda x, y\rangle_i\;\text{ and }\;
\langle  B_iJx,y\rangle_i=\int_{\sigma(T_i)}\vert\text{Im}(\lambda)\vert d\langle E^i_\lambda Jx, y\rangle_i.
\end{align*}
It follows that
\begin{align*}
\langle  Tx,y\rangle=\langle  (A+BJ)x,y\rangle_i=\int_{\sigma(T_i)}\text{Re}(\lambda) d\langle E^i_\lambda x, y\rangle_i+\int_{\sigma(T_i)}\vert\text{Im}(\lambda)\vert d\langle E^i_\lambda Jx, y\rangle_i.
\end{align*}
Let $\omega\in\mathcal{A}_{\sigma(T_i)}$, by the polarization identity, we have  
\begin{align*}
\langle E^i(\omega) x, y\rangle_i=&\frac{1}{4}\Big(\langle E^i(\omega) (x+y), x+y\rangle_i-\langle E^i(\omega) (x-y), x-y\rangle_i\\
&+i\langle E^i(\omega) (x+yi), x+yi\rangle_i-i\langle E^i(\omega) (x-yi), x-yi\rangle_i\Big).
\end{align*}
Since $\langle E^i(\omega)z, z\rangle_i\in\mathbb{R}$ for all $z\in\mathbb{H}$, $\langle E^i(\omega) x, y\rangle_i\in\mathbb{C}_i$ and so $\langle E^i(\omega) x, y\rangle_i=\langle E^i(\omega) x, y\rangle$. Thus 
$$\langle  Tx,y\rangle=\int_{\sigma(T_i)}\text{Re}(\lambda) d\langle E^i_\lambda x, y\rangle+\int_{\sigma(T_i)}\vert\text{Im}(\lambda)\vert d\langle E^i_\lambda Jx, y\rangle.$$
Let $E$ be the spectral measure constructed in part (i), it is easy to see that $E^i\big(\omega\big)=E\big([\omega]\big)$ for all $\omega\in\mathcal{A}_{\sigma(T_i)}^s$. Thus for any continuous real slice function $f$ on $\sigma_S(T)$, we have
$$\int_{\sigma(T_i)}f(\lambda) d\langle E^i_\lambda u, v\rangle=\int_{\sigma_S(T)}f(\lambda) d\langle E_\lambda u, v\rangle\;\text{ for all }\;u,v\in\mathcal{H}.$$
Therefore, 
$$\langle  Tx,y\rangle=\int_{\sigma_S(T)}\text{Re}(\lambda) d\langle E_\lambda x, y\rangle+\int_{\sigma_S(T)}\vert\text{Im}(\lambda)\vert d\langle E_\lambda Jx, y\rangle.$$

\noindent (iii) Uniqueness of the spectral measure $E :\mathcal{A}_{\sigma_S(T)}^s\rightarrow\mathcal{B}_R(\mathcal{H})$ is proved as follows. For all $P\in\mathbb{R}[X,Y]$, let $p(T)$ be as in (\ref{6eq2}), then
$$\int_{\sigma(T_i)}\vert p(\lambda)\vert^2 d\langle E^i_\lambda x, x\rangle_i=\langle p(T)^*p(T) x,x\rangle=\int_{\sigma_S(T)}\vert p(\lambda)\vert^2 d\langle E_\lambda x, x\rangle.$$ 
Thus (see proof of Lemma \ref{6l2})
$$\int_{\sigma(T_i)}\chi_{\Omega}(\lambda) d\langle E^i_{\lambda } x, x\rangle=\int_{\sigma_S(T)}\chi_\Omega(\lambda) d\langle E_\lambda x, x\rangle\;\;\text{ for all }\;\; \Omega\in\mathcal{A}_{\sigma_S(T)}^s.$$
And so  $E^i(\Omega\cap\mathbb{C}_i)=E(\Omega)$ for all $\Omega\in\mathcal{A}_{\sigma_S(T)}^s$. Since $E^i$ is the spectral measure of the spectral decomposition of $T_i$, $E^i$ is unique then so is $E$.
\end{proof}
\begin{lemma}
Let $T=\int_{\sigma_S(T)}\lambda dE_\lambda$ be the spectral decomposition of a normal operator $T\in\mathcal{B}_R(\mathcal{H})$ as in Theorem $\ref{6t1}$. If $\Omega\in\mathcal{A}_{\sigma_S(T)}^s$, then
\begin{itemize}
\item[(i)] $E(\Omega)\neq 0$ whenever $\Omega$ is open in the relative topology of $\sigma_S(T)$.
\item[(ii)] $E(\Omega)T = TE(\Omega)$. 
\item[(iii)] $\sigma_S(T\vert_{\mathcal{R}(E(\Omega))})\subseteq\overline{\Omega}$.
\end{itemize} 
\end{lemma}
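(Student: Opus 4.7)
My plan is to derive all three conclusions from the identification $E(\Omega)=E^i(\Omega\cap\mathbb{C}_i)$, valid for every $i\in\mathbb{S}$, that was established in part~(i) of the proof of Theorem~\ref{6t1}. Each statement will be reduced to the corresponding classical statement for the complex spectral measure $E^i$ of $T_i$ on $\mathcal{H}_i$.

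For part~(ii), I would simply invoke the classical fact that for a complex normal operator the spectral projections commute with the operator: $E^i(\omega)T_i=T_iE^i(\omega)$ for every $\omega\in\mathcal{A}_{\sigma(T_i)}$. Since $T$ and $T_i$ agree set-theoretically and $E(\Omega)=E^i(\Omega\cap\mathbb{C}_i)$, this immediately gives $E(\Omega)T=TE(\Omega)$.

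For part~(i), the key observation is that a nonempty axially symmetric set $\Omega$ relatively open in $\sigma_S(T)$ meets $\mathbb{C}_i$ in a nonempty set that is relatively open in $\sigma(T_i)=\sigma_S(T)\cap\mathbb{C}_i$ \big(by \cite[Proposition 3.8]{6BB}\big). Nonemptiness is because for any $q\in\Omega$ the point $\mathrm{Re}(q)+|\mathrm{Im}(q)|i$ lies in $[q]\cap\mathbb{C}_i\subseteq\Omega\cap\mathbb{C}_i$; openness is inherited from the relative topology. The classical complex spectral theorem \cite[Theorem 3.15]{6C} then yields $E^i(\Omega\cap\mathbb{C}_i)\neq 0$, hence $E(\Omega)\neq 0$.

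For part~(iii), I would first apply part~(ii) to both $T$ and $T^*$ \big(the latter works because $(T^*)_i=(T_i)^*$ by Lemma \ref{6l4} and $E^i(\omega)$ commutes with $(T_i)^*$ as well\big) to conclude that $\mathcal{H}_\Omega:=\mathcal{R}(E(\Omega))$ reduces $T$, so $S:=T|_{\mathcal{H}_\Omega}\in\mathcal{B}_R(\mathcal{H}_\Omega)$ is again a bounded normal right quaternionic linear operator. Fixing $i\in\mathbb{S}$ and setting $\omega:=\Omega\cap\mathbb{C}_i$, one has $S_i=(T_i)|_{\mathcal{R}(E^i(\omega))}$, and the classical complex spectral inclusion gives $\sigma(S_i)\subseteq\overline{\omega}\subseteq\overline{\Omega}\cap\mathbb{C}_i$. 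Applying \cite[Proposition 3.8]{6BB} to the normal operator $S$ yields $\sigma(S_i)=\sigma_S(S)\cap\mathbb{C}_i$, so by axial symmetry of $\sigma_S(S)$ we obtain $\sigma_S(S)=[\sigma(S_i)]\subseteq[\overline{\Omega}]$. A short approximation check shows that $\overline{\Omega}$ is axially symmetric \big(given $q_n\to q$ with $q_n\in\Omega$, each point of $[q]$ is the limit of the analogous points in the $2$-spheres $[q_n]\subseteq\Omega$\big), so $[\overline{\Omega}]=\overline{\Omega}$ and $\sigma_S(S)\subseteq\overline{\Omega}$.

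The main obstacle is the bookkeeping in part~(iii): one has to verify that restricting a quaternionic normal operator to the range of one of its spectral projections produces again a bounded normal right quaternionic operator, and that the identity $\sigma_S(S)=[\sigma(S_i)]$ combined with axial symmetry of $\overline{\Omega}$ correctly lifts the complex spectral inclusion to the spherical one. Everything else is an immediate transfer of classical complex-variable facts through the identification $E(\Omega)=E^i(\Omega\cap\mathbb{C}_i)$.
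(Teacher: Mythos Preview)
Your proposal is correct and follows essentially the same route as the paper: all three items are deduced from the identification $E(\Omega)=E^i(\Omega\cap\mathbb{C}_i)$ together with the corresponding classical facts for the complex spectral measure of $T_i$ \big(\cite[Theorem 3.15 and Lemma 3.16]{6C}\big) and the relation $\sigma_S(S)=[\sigma(S_i)]$ from \cite[Proposition 3.8]{6BB}. Your write-up is in fact a bit more careful than the paper's in part~(iii), where you explicitly verify that the restriction is again normal and that $\overline{\Omega}$ is axially symmetric; the paper simply asserts $[\overline{\Omega\cap\mathbb{C}_i}]=\overline{\Omega}$ without comment.
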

\begin{proof}
Let $i\in\mathbb{S}$ and let $E^i$ be the spectral measure of the spectral decomposition of $T_i$.\\ 

\noindent (i) We have $E\big(\Omega\big)=E^i\big(\Omega\cap\mathbb{C}_i\big)$. Since $\Omega$ is open in the relative topology of $\sigma_S(T)$, $\Omega\cap\mathbb{C}_i$ is open relative to the topology of $\sigma(T_i)$. Then by \cite[Theorem 3.15]{6C}, $E^i\big(\Omega\cap\mathbb{C}_i\big)\neq0$, and so $E\big(\Omega\big)\neq0$.\\

\noindent (ii) By \cite[Lemma 3.16]{6C} it follows that $E^i(\Omega\cap\mathbb{C}_i)T_i = T_iE^i(\Omega\cap\mathbb{C}_i)$. Then clearly $E(\Omega)T = TE(\Omega)$.\\

\noindent (iii) It follows from \cite[Proposition 3.8]{6BB},  that $\sigma_S(T\vert_{\mathcal{R}(E(\Omega))})=[\sigma(T_i\vert_{\mathcal{R}(E(\Omega))_i})]$ and by \cite[Lemma 3.16]{6C}, we get $\sigma(T_i\vert_{\mathcal{R}(E(\Omega))_i})=\sigma(T_i\vert_{\mathcal{R}(E^i(\Omega\cap\mathbb{C}_i))_i})\subseteq\overline{\Omega\cap\mathbb{C}_i}$. Thus $[\sigma(T_i\vert_{\mathcal{R}(E(\Omega))_i})]\subseteq[\overline{\Omega\cap\mathbb{C}_i}]$, that is $\sigma_S(T\vert_{\mathcal{R}(E(\Omega))})\subseteq\overline{\Omega}$.
\end{proof}
\begin{corollary}[Fuglede Theorem]
Let $T=\int_{\sigma_S(T)}\lambda dE_\lambda$ be the spectral decomposition of a normal operator $T\in\mathcal{B}_R(\mathcal{H})$. If $S\in\mathcal{B}_R(\mathcal{H})$ commutes with $T$, then $S$ commutes with $E(\Omega)$ for every $\Omega\in\mathcal{A}_{\sigma_S(T)}^s$.
\end{corollary}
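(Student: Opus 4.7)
The plan is to reduce the statement to the classical complex Fuglede theorem by passing to the complexification $\mathcal{H}_i$ for an arbitrary $i\in\mathbb{S}$, exactly in the spirit of the proof of Theorem \ref{6t1}. First, I fix $i\in\mathbb{S}$ and consider the operators $T_i, S_i \in \mathcal{B}(\mathcal{H}_i)$. Since $S$ is right $\mathbb{H}$-linear, we have in particular $S(xp) = (Sx)p$ for every $p\in\mathbb{C}_i$, so $S_i$ is well-defined as a $\mathbb{C}_i$-linear operator on $\mathcal{H}_i$. Its boundedness follows from $\|S_ix\|_i = \|Sx\| \leq \|S\|\,\|x\| = \|S\|\,\|x\|_i$. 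By Lemma \ref{6l4}, $T_i$ is a bounded normal operator on the complex Hilbert space $\mathcal{H}_i$, and its spectral measure is the $E^i$ appearing in part (i) of the proof of Theorem \ref{6t1}.

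Next, I would translate the commutation relation: the hypothesis $ST = TS$ on $\mathcal{H}$ gives, by restriction, $S_iT_i = T_iS_i$ on $\mathcal{H}_i$. This places us in the setting of the classical Fuglede theorem for normal operators on complex Hilbert spaces, which asserts that if a bounded operator commutes with a bounded normal operator on a complex Hilbert space, then it commutes with every spectral projection of that normal operator. Applying this to $S_i$ and $T_i$, we conclude that $S_iE^i(\omega) = E^i(\omega)S_i$ for every $\omega\in\mathcal{A}_{\sigma(T_i)}$.

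Finally, given any axially symmetric $\Omega\in\mathcal{A}_{\sigma_S(T)}^s$, the construction in part (i) of the proof of Theorem \ref{6t1} yields $E(\Omega) = E^i(\Omega\cap\mathbb{C}_i)$, where $\Omega\cap\mathbb{C}_i$ is a Borel subset of $\sigma(T_i)$ that is symmetric with respect to the real axis. Hence
$$SE(\Omega) = S_iE^i(\Omega\cap\mathbb{C}_i) = E^i(\Omega\cap\mathbb{C}_i)S_i = E(\Omega)S,$$
which is the desired conclusion.

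There is no serious obstacle here: the only subtle point is checking that $S_i$ is genuinely a bounded $\mathbb{C}_i$-linear operator on $\mathcal{H}_i$, and that right $\mathbb{H}$-linearity is preserved under the restriction — both follow immediately from the definitions, just as in Lemma \ref{6l4}. Once this is in place, the result is a one-step translation of the classical Fuglede theorem through the complexification machinery already developed in the paper.
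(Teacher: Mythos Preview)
Your proof is correct and follows essentially the same route as the paper: pass to $\mathcal{H}_i$, observe that $S_i$ commutes with the normal operator $T_i$, invoke the classical complex Fuglede theorem to get $S_iE^i(\omega)=E^i(\omega)S_i$, and then use $E(\Omega)=E^i(\Omega\cap\mathbb{C}_i)$ to conclude. The paper's version is simply more terse, citing \cite[Theorem 3.17]{6C} directly without spelling out the verification that $S_i\in\mathcal{B}(\mathcal{H}_i)$.
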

\begin{proof}
If $S\in\mathcal{B}_R(\mathcal{H})$ commutes with $T$, then $S$ commutes with $T_i$, and by \cite[Theorem 3.17]{6C}, $S$ commutes with $E^i(\Omega\cap\mathbb{C}_i)$ for all $\Omega\in\mathcal{A}_{\sigma_S(T)}^s$. Thus $S$ commutes with $E(\Omega)$ for all $\Omega\in\mathcal{A}_{\sigma_S(T)}^s$.
\end{proof}
\begin{corollary}[Fuglede-Putnam Theorem]
Suppose $T_1,T_2\in\mathcal{B}_R(\mathcal{H})$ are normal operators. If there exists $S\in\mathcal{B}_R(\mathcal{H})$ such that $T_1S=ST_2$, then $T_1^*S=ST_2^*$.
\end{corollary}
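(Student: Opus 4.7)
The plan is to reduce to the classical complex Fuglede--Putnam Theorem via the complexification $\mathcal{H}_i$ introduced earlier in the paper, using Lemma \ref{6l4} as the main bridge. All three objects $T_1$, $T_2$, $S$ are right quaternionic linear, so in particular they commute with right multiplication by any $\alpha \in \mathbb{C}_i$; hence their underlying maps define bounded $\mathbb{C}_i$-linear operators $(T_1)_i$, $(T_2)_i$, $S_i$ on the complex Hilbert space $\mathcal{H}_i$.

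First, I would fix any $i\in\mathbb{S}$ and apply Lemma \ref{6l4} to conclude that $(T_1)_i$ and $(T_2)_i$ are normal operators on $\mathcal{H}_i$, with adjoints $((T_k)_i)^*=(T_k^*)_i$ for $k=1,2$. Second, I would observe that the hypothesis $T_1 S = S T_2$, being an equality of maps $\mathcal{H}\to\mathcal{H}$, passes verbatim to an equality of maps $\mathcal{H}_i\to\mathcal{H}_i$, namely
\begin{equation*}
(T_1)_i \, S_i = S_i \, (T_2)_i .
\end{equation*}

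Third, I would invoke the classical Fuglede--Putnam Theorem on the complex Hilbert space $\mathcal{H}_i$ (see, e.g., the standard corollary to \cite[Theorem 3.15]{6C}) applied to the pair of normal operators $(T_1)_i,(T_2)_i$ and the intertwiner $S_i$, to obtain
\begin{equation*}
((T_1)_i)^* \, S_i = S_i \, ((T_2)_i)^* ,
\end{equation*}
which, by Lemma \ref{6l4}, rewrites as $(T_1^*)_i S_i = S_i (T_2^*)_i$. Finally, since this is an identity of the underlying maps $\mathcal{H}\to\mathcal{H}$, it is exactly $T_1^* S = S T_2^*$ in $\mathcal{B}_R(\mathcal{H})$, which concludes the proof.

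I do not expect any serious obstacle here: the whole content of the argument is that Lemma \ref{6l4} lets normality and adjoints be tested on the complex slice $\mathcal{H}_i$, and once that is in place the statement is a one-line corollary of the complex Fuglede--Putnam Theorem. The only point worth being careful about is that $S$ need not be normal or self-adjoint, but the classical Fuglede--Putnam Theorem imposes no such condition on the intertwiner, so no subtlety arises.
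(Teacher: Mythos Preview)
Your proof is correct and follows essentially the same approach as the paper: reduce to the complex Hilbert space $\mathcal{H}_i$, use Lemma~\ref{6l4} to transfer normality and adjoints, apply the classical Fuglede--Putnam Theorem (cited in the paper as \cite[Corollary 3.20]{6C}), and read the resulting identity back as an equality in $\mathcal{B}_R(\mathcal{H})$.
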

\begin{proof}
By Lemma \ref{6l4}, $(T_1)_i$ and $(T_2)_i$ are normal. Since $(T_1)_iS_i=S_i(T_2)_i$, $(T_1^*)_iS_i=S_i(T_2^*)_i$ (see,  \cite[Corollary 3.20]{6C}). It follows that $T_1^*S=ST_2^*$.
\end{proof}

\end{document}